\newtheorem{thm}{Theorem}[section]
\newtheorem{cor}[thm]{Corollary}
\newtheorem{pro}[thm]{Proposition}
\newtheorem{DEF}[thm]{Definition}
\newtheorem{exa}[thm]{Example}
\newtheorem{rem}[thm]{Remark}
\numberwithin{equation}{section}
\begin{document}
\title[]{ SEGAL FR$\acute{E}$CHET ALGEBRAS}%
\author{ F. Abtahi, S. Rahnama and A. Rejali}%


%


\thanks{}%


\keywords{ Abstract Segal algebra, Banach algebra,
Fr$\acute{e}$chet algebra. }

\subjclass[2000]{46A03, 46A04}

\date{}%

\begin{abstract}
Let $({\mathcal A},p_{\ell})_{\ell\in {\Bbb N}}$ be a
Fr$\acute{e}$chet algebra. In this paper, we introduce the concept
of Segal Fr$\acute{e}$chet algebra and investigate known results
about abstract Segal algebras, for Segal Fr$\acute{e}$chet
algebras. Also we recall the concept of approximate identities for
topological algebras and provide some remarkable results for Segal
Fr$\acute{e}$chet algebras. Moreover, we verify ideal theorem for
Fr$\acute{e}$chet algebras and characterize closed ideals of Segal
Fr$\acute{e}$chet algebra $({\mathcal B},q_{m})_{m\in {\Bbb N}}$
in $({\mathcal A},p_{\ell})_{\ell\in {\Bbb N}}$.
\end{abstract}
\date{}%

\maketitle

\section{\bf Introduction}

Segal algebras were first defined by H. Reiter for group algebras
in \cite{re}. Then Burnham \cite{Burn} introduced the notion of
abstract Segal algebra in Banach algebra $(\mathcal
A,\|.\|_{\mathcal A})$. As an important result, he showed that
approximate identities in the proper abstract Segal algebras can
not be bounded. Also he examined ideal structure of a class of
Banach algebras that subsume Cigler's normed ideals \cite{Cig} and
Reiter's Segal algebras \cite[Page 127]{re1}. Moreover, he proved
that if $(\mathcal B,\|.\|_{\mathcal B})$ is a commutative
abstract Segal algebra in $(\mathcal A,\|.\|_{\mathcal A})$ with
an approximate identity, then there is a one to one correspondence
between the closed ideals of $B$ and those of $A$. We also found
many other valuable results in \cite{Bar}. In fact he considered
the relationship between the Banach algebras $(\mathcal
A,\|.\|_{\mathcal A})$ and $(\mathcal B,\|.\|_{\mathcal B})$,
whenever $\mathcal B$ is also a left ideal in $\mathcal A$. For
example, he proved that if $\|b\|_{\mathcal A}\leq
D\|b\|_{\mathcal B}$, for all $b\in\mathcal B$ and some $D>0$,
then $\mathcal B$ is a Banach left $\mathcal A-$module
\cite[Theorem 2.3]{Bar}. Moreover, he showed that if there is the
certain above inequality between $\|.\|_{\mathcal A}$ and
$\|.\|_{\mathcal B}$, and also $\mathcal B$ is a Banach left
$\mathcal A-$module, then $\mathcal B$ is a left ideal in
$cl_{\mathcal A}(\mathcal B)$, closure $\mathcal B$ in $\mathcal
A$ \cite[Proposition 2.1]{Bar}. In fact these observations lead to
summarize the definition of abstract Segal algebras. We also refer
to \cite{Burn2}, \cite{Burn3}, \cite{Cig}, \cite{Dit}, \cite{D},
\cite{f2}, \cite{GSR} and \cite{Ri}, which contain valuable
results related to this subject.

Some of the notions related to Banach algebras, have been
introduced and studied for Fr$\acute{e}$chet algebras. For
example, the notion of amenability of a Fr$\acute{e}$chet algebra
was introduced by A. Yu. Pirkovskii \cite{Pir}. He generalized
some theorems about amenability of Banach algebras such as
strictly flat Banach $A$-bimodule, virtual diagonal and
approximate diagonal of Banach algebras, to Fr$\acute{e}$chet
algebras. Also in \cite{Law}, P. Lawson and C. J. Read introduced
and studied some notions about approximate amenability and
approximate contractibility of Fr$\acute{e}$chet algebras.

The present work is essentially raised from the available results
in the field of abstract Segal algebras. Let $(\mathcal
A,p_{\ell})_{\ell\in {\Bbb N}}$ be a Fr$\acute{e}$chet algebra.
According to the definition of abstract Segal algebras
\cite{Burn}, we first introduce the concept of Segal
Fr$\acute{e}$chet algebra in $(\mathcal A,p_{\ell})_{\ell\in {\Bbb
N}}$. To provide an example about Segal Fr$\acute{e}$chet algebra,
we verify the results of Burnham and Barnes for Segal
Fr$\acute{e}$chet algebras and show that Proposition 2.1 and
Theorem 2.3 of \cite{Bar} are valid for Segal Fr$\acute{e}$chet
algebras, as well. It follows that as in abstract Segal algebras,
the definition of Segal Fr$\acute{e}$chet algebra can be
summarized. Moreover, we recall the concept of approximate units,
multiple approximate identity and approximate identity for
Fr$\acute{e}$chet algebras and investigate \cite[Theorem
1.2]{Burn} for Segal Fr$\acute{e}$chet algebras and obtain the
same result. Indeed, we prove that if $(\mathcal B,q_{m})_{m\in
{\Bbb N}}$ is a proper Segal Fr$\acute{e}$chet algebra in
$(\mathcal A,p_{\ell})_{\ell\in {\Bbb N}}$ which contains an
approximate identity $(e_{\alpha})$, then $(e_{\alpha})$ can not
be bounded in $(\mathcal B,q_{m})_{m\in {\Bbb N}}$. At the end, we
verify ideal theorem and characterize closed left ideals of a
Segal Fr$\acute{e}$chet algebra. In fact we show that every closed
left ideal of any Segal Fr$\acute{e}$chet algebra in $(\mathcal
A,p_{\ell})_{\ell\in {\Bbb N}}$, is the intersection of a closed
left ideal of $\mathcal A$ with $\mathcal B$.

\section{\bf Preliminaries}

In this section, we present some basic definitions related to
Fr$\acute{e}$chet algebras, which will be required throughout the
paper. See \cite{Gold}, \cite{Hel2} and \cite{ME}, for more
information.

A locally convex topological vector space $E$ is a topological
vector space in which the origin has a local base of absolutely
convex absorbent sets. A collection $\mathcal{U}$ of zero
neighborhoods in $E$ is called a fundamental system of zero
neighborhoods, if for every zero neighborhood $U$, there exists a
$V\in {\mathcal U}$ and an $\varepsilon>0$ such that $\varepsilon
V\subset U$. Throughout the paper, all locally convex spaces are
assumed to be Hausdorff. $S\subseteq E$ is called bounded if for
every zero neighborhood $U$, there exists scalar $\lambda$ such
that $S\subseteq\lambda U$; it is called balanced if for each
$\alpha\in\Bbb C$ with $|\alpha|\leq 1$, $\alpha S\subseteq S$.
Moreover $S$ is called absorbing if for each $x\in E$, there is
the scalar $\lambda$ such that $x\in\lambda S$.

A family $(p_\alpha)_{\alpha\in A}$ of continuous seminorms on $E$
is called a fundamental system of seminorms, if the sets
$$U_{\alpha}=\{x\in E :p_{\alpha}(x)<1\} \;\;\;\;\;\;(\alpha\in A)$$
form a fundamental system of zero neighborhoods. We refer to
\cite[page 251]{ME}, for more details. Every Hausdorff locally
convex space $E$ has a fundamental system of seminorms
$(p_{\alpha})_{\alpha\in A}$; equivalently a family of the
seminorms satisfying the following properties:
\begin{enumerate}
\item[(i)] For every $x\in E$ with $x\neq 0$, there exists an
${\alpha}\in A$ with $p_{\alpha}(x)>0$; \item[(ii)] For all
${\alpha,\beta}\in A$, there exist ${\gamma}\in A$ and $C>0$ such
that $$\max(p_{\alpha}(x),p_{\beta}(x))\leq C
p_{\gamma}(x)\;\;\;\;\;\;\;\;(x\in E);$$
\end{enumerate}
see \cite[Lemmas 22.4,22.5]{ME}.

Now let $E$ be a locally convex space and
$(p_{\alpha})_{\alpha\in\Lambda}$ be a fundamental system of
seminorms. A subset $B$ of $E$ is bounded if and only if
$\sup_{x\in B}p_{\alpha}(x)<\infty$, for each $\alpha\in\Lambda$.

We recall \cite[Proposition 22.6]{ME}, which is very useful in our
later discussions.

\begin{pro}\label{p2}
Let $E$ and $F$ be locally convex spaces with the fundamental
system of seminorms $(p_{\alpha})_{\alpha \in A}$ in $E$ and
$(q_{\beta})_{\beta \in B}$ in $F$. Then for every linear mapping
$T: E\longrightarrow F$, the following assertions are equivalent.
\begin{enumerate}
\item[(i)]  $T$ is continuous. \item[(ii)] $T$ is continuous at
$0$. \item[(iii)] For each $\beta\in B$ there exist an $\alpha\in
A$ and $C>0$, such that $$q_{\beta}(T(x))\leq Cp_{\alpha}(x),$$
for all $x\in E$.
\end{enumerate}
\end{pro}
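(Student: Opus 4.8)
The plan is to prove the three equivalences by the cyclic chain (i) $\Rightarrow$ (ii) $\Rightarrow$ (iii) $\Rightarrow$ (i). The implication (i) $\Rightarrow$ (ii) is immediate, since continuity on all of $E$ in particular yields continuity at the origin.

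For (ii) $\Rightarrow$ (iii), I would fix $\beta\in B$ and consider the zero neighborhood $V_\beta=\{y\in F:q_\beta(y)<1\}$ in $F$. By continuity of $T$ at $0$, the preimage $T^{-1}(V_\beta)$ is a zero neighborhood in $E$. Since the sets $U_\alpha=\{x\in E:p_\alpha(x)<1\}$ form a fundamental system of zero neighborhoods, there exist $\alpha\in A$ and $\varepsilon>0$ with $\varepsilon U_\alpha\subseteq T^{-1}(V_\beta)$. Unwinding this inclusion, $p_\alpha(x)<1$ forces $q_\beta(T(x))\leq 1/\varepsilon$, using the homogeneity relation $q_\beta(T(\varepsilon x))=\varepsilon\, q_\beta(T(x))$. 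The key step is then the standard homogenization argument: for an arbitrary $x\in E$ and $\delta>0$, apply the bound to the vector $x/(p_\alpha(x)+\delta)$, whose $p_\alpha$-seminorm is strictly less than $1$, and invoke the homogeneity of the seminorms to deduce $q_\beta(T(x))\leq\varepsilon^{-1}(p_\alpha(x)+\delta)$; letting $\delta\to 0$ gives the desired estimate with $C=1/\varepsilon$.

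For (iii) $\Rightarrow$ (i), since $T$ is linear it suffices to verify continuity at $0$, and for that I would test against the subbasic zero neighborhoods $\{y:q_\beta(y)<\eta\}$ of $F$. Given such a set, (iii) supplies $\alpha$ and $C$ with $q_\beta(T(x))\leq C p_\alpha(x)$, so that $\{x:p_\alpha(x)<\eta/C\}\subseteq T^{-1}(\{y:q_\beta(y)<\eta\})$. The left-hand set is a zero neighborhood in $E$, hence the preimage is as well, and continuity of $T$ at $0$ follows.

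I expect the only genuine subtlety to reside in the (ii) $\Rightarrow$ (iii) step, namely the homogenization trick required to pass from the single-scale information $\varepsilon U_\alpha\subseteq T^{-1}(V_\beta)$ to a seminorm inequality valid for every $x\in E$. The remaining implications are routine, provided one argues consistently in terms of the fundamental systems of seminorms and their associated basic neighborhoods rather than manipulating the topologies directly.
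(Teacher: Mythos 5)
Your proof is correct in every step, but note that the paper contains no proof of this proposition to compare against: it is quoted verbatim from \cite[Proposition 22.6]{ME} and used as a black box throughout. Your cyclic argument (i)$\Rightarrow$(ii)$\Rightarrow$(iii)$\Rightarrow$(i), with the homogenization step applied to $x/(p_{\alpha}(x)+\delta)$ to convert the inclusion $\varepsilon U_{\alpha}\subseteq T^{-1}(V_{\beta})$ into the uniform estimate $q_{\beta}(T(x))\leq \varepsilon^{-1}p_{\alpha}(x)$, is precisely the standard textbook proof that the cited source supplies. One cosmetic remark: the sets $\{y\in F: q_{\beta}(y)<\eta\}$ you call subbasic are in fact basic zero neighborhoods, since a fundamental system of seminorms is directed by property (ii) in the paper's preliminaries; this does not affect your argument.
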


\noindent It should be noted that by \cite[page 24]{Hel2}, if
$(E,p_{\mu})$, $(F,q_{\lambda})$ and $(G,r_{\nu})$ are locally
convex spaces, and $\theta:E\times F\to G$ is a bilinear map, then
$\theta$ is jointly continuous if and only if for any ${\nu_0}$
there exist ${\mu_0}$ and ${\lambda_0}$ such that the bilinear map
$$
\theta:(E,p_{\mu_0})\times (F,q_{\lambda_0})\longrightarrow
(G,r_{\nu_0})
$$
is jointly continuous. In other words there exists $C>0$ such that
$$
r_{\nu_0}(\theta(x,y))\leq Cp_{\mu_0}(x)q_{\lambda_0}(y),
$$
for all $x\in E$ and $y\in F$. Recall from \cite{Shi} that
bilinear map $f$ from $E\times F$ into $G$ is said to be
separately continuous if all partial maps $f_x:F\rightarrow G$ and
$f_y:E\rightarrow G$ defined by $y\mapsto f(x,y)$ and $x\mapsto
f(x,y)$, respectively, are continuous for each $x\in E$ and $y\in
F$. By \cite[chapter.III.5.1]{Shi}, both types of continuity
coincide in the class of Fr$\acute{e}$chet spaces and in
particular, Banach spaces. In such a situation, we use only
"continuous" phrase. A topological algebra ${\mathcal A}$ is an
algebra, which is a topological vector space and the
multiplication ${\mathcal A}\times{\mathcal A}\longrightarrow
{\mathcal A}$, defined by $(a,b)\mapsto ab$ is a jointly
continuous mapping; see \cite[Definition (3.1.5)]{Gold}. A locally
m-convex Fr$\acute{e}$chet algebra (lmc Fr$\acute{e}$chet algebra)
is a complete topological algebra, whose topology is given by a
countable family of increasing submultiplicative seminorms; see
\cite{Gold} and \cite{Hel3} for more information. For convenience,
throughout the paper, we use the label "Fr$\acute{e}$chet
algebra", instead of "lmc Fr$\acute{e}$chet algebra".

\section{\bf Introduction of Segal Fr$\acute{e}$chet algebra}

A Banach space $({\mathcal B},\|.\|_{\mathcal B})$ is called an
abstract Segal algebra of $({\mathcal A},\|.\|_{\mathcal A})$ if
the following statements are satisfied:
\begin{enumerate}
\item[$(A_1)$] ${\mathcal B}$ is a dense left ideal in ${\mathcal
A}$. \item[$(A_2)$] There exists $M>0$ such that $\|f\|_{\mathcal
A}\leq M\|f\|_{\mathcal B}$, for each $f\in {\mathcal B}$.
\item[$(A_3)$] There exists $C>0$ such that $\|fg\|_{\mathcal
B}\leq C\|f\|_{\mathcal A}\|g\|_{\mathcal B}$, for each $f,g\in
{\mathcal B}$.
\end{enumerate}
Equivalently, $({\mathcal B},\|.\|_{\mathcal B})$ is an abstract
Segal algebra of $({\mathcal A},\|.\|_{\mathcal A})$ if it is
continuously embedded in $\mathcal A$ and also it is a Banach left
$\mathcal A-$module. Retrieved from this definition, we introduce
the concept of Segal Fr$\acute{e}$chet algebra as the following.

\begin{DEF}\label{d1} \rm
A Fr$\acute{e}$chet algebra $(\mathcal B,q_m)_{m\in \mathbb{N}}$
is a Segal Fr$\acute{e}$chet algebra in a Fr$\acute{e}$chet
algebra $(\mathcal A, p_{\ell})_{{\ell}\in \mathbb{N}}$ if the
following conditions are satisfied:
\begin{enumerate}
\item[(i)] $\mathcal B$ is a dense left ideal in $\mathcal A$.
\item[(ii)] The map
\begin{equation}\label{e1}
i:(\mathcal B, q_m)_{m\in {\Bbb N}}\longrightarrow (\mathcal A ,
p_{\ell})_{\ell\in {\Bbb N}},\;\;\;a \mapsto  a, \;\;\;\;(a\in
\mathcal B)
\end{equation}
is continuous. \item[(iii)] The map
\begin{equation}\label{e2}
(\mathcal B, p_{\ell})_{\ell\in {\Bbb N}}\times (\mathcal B,
q_m)_{m\in {\Bbb N}}\longrightarrow (\mathcal B, q_m)_{m\in {\Bbb
N}},\;\;\;(a,b)\mapsto ab,\;\;\;(a,b\in \mathcal B)
\end{equation}
is jointly continuous.
\end{enumerate}
$\mathcal B$ is called a symmetric Segal Fr$\acute{e}$chet algebra
in $\mathcal A$, if $\mathcal B$ is a dense two-sided ideal in
$\mathcal A$ and \eqref{e1} and \eqref{e2} hold. Moreover the map
$$(\mathcal B, q_m)_{m\in {\Bbb N}}\times (\mathcal B, p_{\ell})_{\ell\in {\Bbb N}}\longrightarrow
(\mathcal B, q_m)_{m\in {\Bbb N}},\;\;\;(a,b)\mapsto
ab,\;\;\;(a,b\in \mathcal B)$$ is jointly continuous.
\end{DEF}

Note that the concept of Segal Fr$\acute{e}$chet algebra is
coincided to the concept of abstract Segal algebras, in the case
where $\mathcal A$ and $\mathcal B$ are Banach algebras.\\

For presenting an example about Definition \ref{d1}, we shall make
some preparations.

\begin{rem}\label{r1}\rm
Let $(\mathcal B,q_m)_{m\in {\Bbb N}}$ be a Segal
Fr$\acute{e}$chet algebra in Fr$\acute{e}$chet algebra $(\mathcal
A, p_{\ell})_{\ell\in {\Bbb N}}$.
\begin{enumerate}
\item As in Banach algebras, conditions (ii) and (iii) in
Definition \ref{d1}, can be given similar to $(A_2)$ and $(A_3)$
in the definition of abstract Segal algebras. In fact Proposition
\ref{p2} implies that condition (ii) in Definition \ref{d1} is
equivalent to the fact that, for every $\ell\in\mathbb{N}$, there
exist $M_{\ell}>0$ and $m_{\ell}\in\mathbb{N}$ such that,
$p_{\ell}(b)\leq M_{\ell} q_{m_{\ell}}(b)$, for all $b\in\mathcal
B$. Moreover by \cite[Page 24]{Hel2}, continuity of the map in
(iii) is equivalent to the fact that for every $m\in\mathbb{N}$
there exist $K_m>0$ and $\ell_{m}, n_m\in\mathbb{N}$ such that for
all $a,b\in\mathcal B$,
$$q_m(ab)\leq K_m p_{\ell_{m}}(a)q_{n_m}(b).$$
\item As in abstract Segal algebras, since ${\mathcal B}$ is a
dense subspace of ${\mathcal A}$, we can extend the continuous map
\eqref{e2} to the unique continuous map
$$(\mathcal A, p_{\ell})_{\ell\in {\Bbb N}}\times(\mathcal
B,q_m)_{m\in {\Bbb N}}\longrightarrow(\mathcal B,q_m)_{m\in {\Bbb
N}},\;\;\;\;\;\;(a,b)\mapsto ab.$$ Indeed, since
$$(\mathcal B,p_{\ell})_{\ell\in {\Bbb N}}\times(\mathcal
B,q_m)_{m\in {\Bbb N}}\longrightarrow(\mathcal B,q_m)_{m\in {\Bbb
N}},\;\;\;(a,b)\mapsto  ab,$$ is jointly continuous, so it is
separately continuous and consequently both maps
$$(\mathcal B,p_{\ell})_{\ell\in {\Bbb N}}\longrightarrow(\mathcal B,q_m)_{m\in {\Bbb N}},\;\;\;a\mapsto  ab,$$
and
$$(\mathcal B,q_m)_{m\in {\Bbb N}}\longrightarrow(\mathcal B,q_m)_{m\in {\Bbb N}},\;\;\;b\mapsto  ab,$$
are continuous, for all $a,b\in\mathcal B$. It is not hard to see
that for each $a\in\mathcal A$, the map
$$(\mathcal B,q_m)_{m\in {\Bbb N}}\longrightarrow(\mathcal B,q_m)_{m\in {\Bbb N}},\;\;\;b\mapsto  ab,$$
is continuous. Moreover by \cite[Lemma 22.19]{ME}, the map
$$(\mathcal B,p_{\ell})_{\ell\in {\Bbb N}}\longrightarrow(\mathcal B,q_m)_{m\in
{\Bbb N}},\;\;\;a\mapsto  ab,$$ has a unique extension to the continuous map
$$(\mathcal A,p_{\ell})_{\ell\in {\Bbb N}}\longrightarrow(\mathcal B,q_m)_{m\in
{\Bbb N}},\;\;\;a\mapsto  ab,$$ for all $b\in\mathcal B$. It
follows that the map
$$(\mathcal A,p_{\ell})_{\ell\in {\Bbb N}}\times(\mathcal B,q_m)_{m\in {\Bbb N}}
\longrightarrow(\mathcal B,q_m)_{m\in {\Bbb N}},$$ is separately
continuous. Since $\mathcal A$ and $\mathcal B$ are
Fr$\acute{e}$chet algebra, thus the map is jointly continuous.
\end{enumerate}
\end{rem}

As the main results of this section, we prove Proposition 2.1 and
Theorem 2.3 of \cite{Bar}, for the Fr$\acute{e}$chet algebras. In
fact we show that the definition of Segal Fr$\acute{e}$chet
algebra can be summarized. First, we recall closed graph theorem
for the Fr$\acute{e}$chet spaces. Let $E$ and $F$ be
Fr$\acute{e}$chet spaces and $T:E\longrightarrow F$ is a linear
mapping such that its graph, $\{(x,T(x)):\;\;x\in E\},$ is closed
in $E\times F$. Then $T$ is continuous; see \cite[B.2]{Gold}.

\begin{thm}\label{t1}
Let $(\mathcal B, q_m)_{m\in {\Bbb N}}$ be a Fr$\acute{e}$chet
algebra, which is a left ideal in a Fr$\acute{e}$chet algebra
$(\mathcal A, p_{\ell})_{\ell\in {\Bbb N}}$. If the map
$$i:(\mathcal B, q_m)_{m\in {\Bbb N}}\longrightarrow (\mathcal A,
p_{\ell})_{\ell\in {\Bbb N}}\;\;\;\;\;\;a\mapsto a$$ is
continuous, then the map
$$(\mathcal A, p_{\ell})_{\ell\in {\Bbb N}}\times (\mathcal B,q_m)_{m\in {\Bbb N}}\longrightarrow
(\mathcal B,q_m)_{m\in {\Bbb N}},\;\;\;(a,b)\mapsto ab$$ is
continuous.
\end{thm}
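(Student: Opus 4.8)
The plan is to prove that the multiplication is \emph{separately} continuous and then invoke the fact recalled in the Preliminaries (see \cite[chapter III.5.1]{Shi}) that on a product of Fr\'echet spaces separate continuity of a bilinear map implies joint continuity. Since $\mathcal B$ is a left ideal in $\mathcal A$, the product $ab$ lies in $\mathcal B$ whenever $a\in\mathcal A$ and $b\in\mathcal B$, so both partial maps
$$
L_a:(\mathcal B,q_m)\longrightarrow(\mathcal B,q_m),\quad b\mapsto ab\qquad(a\in\mathcal A),
$$
$$
R_b:(\mathcal A,p_{\ell})\longrightarrow(\mathcal B,q_m),\quad a\mapsto ab\qquad(b\in\mathcal B),
$$
are well defined, and it suffices to show that each of them is continuous. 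Both are linear maps between Fr\'echet spaces, so I would establish their continuity by the closed graph theorem recalled immediately above.

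To see that $L_a$ has closed graph, fix $a\in\mathcal A$ and suppose $(b_n)$ is a sequence in $\mathcal B$ with $b_n\to b$ in $(\mathcal B,q_m)$ and $ab_n\to c$ in $(\mathcal B,q_m)$; I must check that $c=ab$. Applying the continuous inclusion $i$, both convergences persist in $(\mathcal A,p_{\ell})$, so $b_n\to b$ and $ab_n\to c$ in $\mathcal A$. On the other hand, multiplication in the Fr\'echet algebra $\mathcal A$ is jointly, hence separately, continuous, so left multiplication by $a$ is continuous on $\mathcal A$ and therefore $ab_n\to ab$ in $\mathcal A$. Since $\mathcal A$ is Hausdorff, limits are unique and $c=ab$. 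Thus the graph of $L_a$ is closed and $L_a$ is continuous.

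The argument for $R_b$ is symmetric. Fix $b\in\mathcal B$ and take a sequence with $a_n\to a$ in $(\mathcal A,p_{\ell})$ and $a_n b\to c$ in $(\mathcal B,q_m)$. The continuous inclusion $i$ gives $a_n b\to c$ in $\mathcal A$, while joint continuity of the product on $\mathcal A$ yields $a_n b\to ab$ in $\mathcal A$; uniqueness of limits in the Hausdorff space $\mathcal A$ forces $c=ab$. Hence $R_b$ has closed graph and is continuous. Having shown that every $L_a$ and every $R_b$ is continuous, the multiplication $\mathcal A\times\mathcal B\to\mathcal B$ is separately continuous, and by the Fr\'echet-space criterion it is jointly continuous, as required.

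The crux of the argument is the identification of the limit $c$ in the two closed-graph verifications: this is precisely where the continuity of $i$ (to transport convergence from $\mathcal B$ into $\mathcal A$) and the joint continuity of the product in the ambient algebra $\mathcal A$ (to pin down $c=ab$) are indispensable. Everything else is routine, provided one is careful that the closed graph theorem is being applied between genuine Fr\'echet spaces in each instance, which holds since $\mathcal A$ and $\mathcal B$ are both Fr\'echet.
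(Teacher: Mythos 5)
Your proof is correct and takes essentially the same approach as the paper: the paper also applies the closed graph theorem to the two partial maps (its $T_b:a\mapsto ab$ and $S_a:b\mapsto ab$, your $R_b$ and $L_a$), uses the continuous inclusion $i$ together with the submultiplicativity of the $p_{\ell}$ to identify the limit inside $\mathcal A$, and then concludes joint continuity from separate continuity via the Fr\'echet-space fact recalled in the Preliminaries. The only cosmetic differences are that the paper reduces the closed-graph verification to a null sequence (taking $a_n\to 0$ and showing $c=0$) and leaves the separate-to-joint step and the $S_a$ case implicit (``similarly''), both of which you spell out explicitly.
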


\begin{proof}
Let $b\in \mathcal B$ and define $$T_b:(\mathcal A,
p_{\ell})_{\ell\in {\Bbb N}} \longrightarrow(\mathcal B,
q_m)_{m\in {\Bbb N}}$$ by $T_b(a)=ab$, for each $a\in \mathcal A.$
By the closed graph theorem in Fr$\acute{e}$chet algebras, we show
that $T_b$ is continuous. Suppose that $(a_n)_{n\in \mathbb{N}}$
is a sequence in $\mathcal A$ such that $\lim_{n\rightarrow
\infty}p_{\ell}(a_n)=0$ and
\begin{equation}\label{e3}
\lim_{n\rightarrow \infty}q_m(T_b(a_n)-c)=0,
\end{equation}
for all ${\ell},m\in\mathbb{N}$ and some $c\in\mathcal B$. We show
that $c=0$. By the hypothesis for each $\ell\in {\Bbb N}$, there
exist $M_{\ell}>0$ and $m_{\ell}\in \mathbb{N}$, such that
$$p_{\ell}(a)\leq M_{\ell} q_{m_{\ell}}(a), \;\;\;\;\;\;\;\;(a\in \mathcal
B)$$ and so for each $n\in\Bbb N$
\begin{equation}\label{e4}
p_{\ell}(a_nb-c)\leq M_{\ell}q_{m_{\ell}}(a_nb-c).
\end{equation}
By \eqref{e3}, the right hand side of the above inequality tends
to zero. The inequality \eqref{e4} implies that
$\lim_{n\rightarrow \infty}a_nb=c$, in the topology of $\mathcal
A$. Moreover, since $p_{\ell}$ is submultiplicative, we have
$p_{\ell}(a_nb)\leq p_{\ell}(a_n)p_ {\ell}(b).$ It follows that
$\lim_{n\rightarrow \infty}a_nb=0,$ in the topology of $\mathcal
A$. Thus $c=0$, as claimed. Consequently $T_b$ is continuous.
Similarly, one can show that for each $a\in {\mathcal A}$, the map
$S_a:(\mathcal B,q_m)_{m\in {\Bbb N}}\to (\mathcal B,q_m)_{m\in
{\Bbb N}}$ defined by $S_a(b)=ab$ is continuous. This completes
the proof.
\end{proof}

\begin{thm}
Let $(\mathcal A, p_{\ell})_{\ell\in {\Bbb N}}$ and $(\mathcal
B,q_m)_{m\in {\Bbb N}}$ be Fr$\acute{e}$chet algebras, such that
$\mathcal B$ is a subalgebra of $\mathcal A$ and both maps given in
\emph{(ii)} and \emph{(iii)} in Definition $\ref{d1}$ are
continuous. Then $\mathcal B$ is a left ideal in $cl_{\mathcal
A}(\mathcal B)$, the closure of $\mathcal B$ in $\mathcal A$.
\end{thm}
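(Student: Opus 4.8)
The plan is to prove directly that $ab \in \mathcal B$ whenever $a \in cl_{\mathcal A}(\mathcal B)$ and $b \in \mathcal B$; this is exactly the defining property of $\mathcal B$ being a left ideal in $cl_{\mathcal A}(\mathcal B)$. Since $\mathcal A$ is a Fr\'echet (hence metrizable) space, every point of the closure is a sequential limit, so I would begin by fixing $a \in cl_{\mathcal A}(\mathcal B)$ and choosing a sequence $(a_n)_{n\in\Bbb N}$ in $\mathcal B$ with $p_{\ell}(a_n - a) \to 0$ for every $\ell \in \Bbb N$. Fix also $b \in \mathcal B$ and consider the sequence $(a_n b)_{n\in\Bbb N}$, which lies in $\mathcal B$ because $\mathcal B$ is a subalgebra.

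The first substantive step is to show that $(a_n b)$ is Cauchy in $(\mathcal B, q_m)_{m\in\Bbb N}$. I would invoke the estimate recorded in Remark \ref{r1}(1) as a consequence of the joint continuity of the map in (iii): for every $m \in \Bbb N$ there exist $K_m > 0$ and $\ell_m, n_m \in \Bbb N$ such that
$$q_m(a_n b - a_k b) = q_m((a_n - a_k)b) \leq K_m \, p_{\ell_m}(a_n - a_k)\, q_{n_m}(b).$$
Since $(a_n)$ converges in $(\mathcal A, p_{\ell})$, it is $p_{\ell_m}$-Cauchy, so the right-hand side tends to $0$ as $n, k \to \infty$. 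Hence $(a_n b)$ is $q_m$-Cauchy for every $m$, and by completeness of the Fr\'echet algebra $\mathcal B$ there exists $c \in \mathcal B$ with $q_m(a_n b - c) \to 0$ for all $m \in \Bbb N$.

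It remains to identify the limit $c$ with the product $ab$. On one hand, the inclusion map $i$ is continuous by hypothesis (condition (ii)), so $q_m$-convergence forces $p_{\ell}$-convergence; thus $a_n b \to c$ in $(\mathcal A, p_{\ell})$. On the other hand, multiplication in the Fr\'echet algebra $\mathcal A$ is jointly continuous, hence separately continuous, so right multiplication by $b$ is continuous on $(\mathcal A, p_{\ell})$ and therefore $a_n b \to ab$ in $(\mathcal A, p_{\ell})$. Because $\mathcal A$ is Hausdorff, limits are unique, and I conclude $ab = c \in \mathcal B$. As $a \in cl_{\mathcal A}(\mathcal B)$ and $b \in \mathcal B$ were arbitrary, $\mathcal B$ is a left ideal in $cl_{\mathcal A}(\mathcal B)$.

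The main obstacle is precisely this last identification: the single sequence $(a_n b)$ must be pushed through two different topologies simultaneously — the finer $q_m$-topology, where completeness of $\mathcal B$ manufactures a limit $c$, and the coarser $p_{\ell}$-topology, where separate continuity of multiplication in $\mathcal A$ manufactures the limit $ab$ — and the two must be reconciled. The bridge between them is the continuity of the embedding together with the Hausdorff property; once that link is in place the remaining arguments are routine applications of the module-type inequalities of Remark \ref{r1}.
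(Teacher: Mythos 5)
Your proposal is correct and follows essentially the same argument as the paper: choose $(a_n)$ in $\mathcal B$ converging to $a$ in $\mathcal A$, use the module-type estimate from Remark \ref{r1} to show $(a_n b)$ is Cauchy in $(\mathcal B, q_m)$, obtain a limit $c\in\mathcal B$ by completeness, and identify $c=ab$ via the continuous embedding (ii), joint continuity of multiplication in $\mathcal A$, and uniqueness of limits. The paper's proof is the same in every substantive step, so nothing further is needed.
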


\begin{proof}
Suppose that $b\in\mathcal B$ and $a\in cl_{\mathcal A}(\mathcal
B)$. Thus there exists a sequence $(a_n)_{n\in\mathbb{N}}$ in
$\mathcal B$ such that $\lim_{n\rightarrow
\infty}p_{\ell}(a_n-a)=0$, for every $\ell\in\mathbb{N}$. Also by
Remark \ref{r1}, for each $m\in\mathbb{N}$, there exist $C_m>0$
and $n_m, \ell_{m}\in\mathbb{N}$ such that
$$q_m(a_nb-a_kb)=q_m((a_n-a_k)b)\leq C_m p_{\ell_{m}}(a_n-a_k)q_{n_m}(b),$$ for
all $n,k\in\mathbb{N}$. Since $(a_n)_{n\in\mathbb{N}}$ is
convergent in $\mathcal A$, it is a Cauchy sequence in $\mathcal
A$, and so $(a_nb)_{n\in\mathbb{N}}$ is a Cauchy sequence in
$\mathcal B$. Consequently there exists $c\in\mathcal B$ such that
for every $m\in\mathbb{N}$,
$\lim_{n\rightarrow\infty}q_m(a_nb-c)=0$. On the other hand by
Remark \ref{r1}, for every $\ell\in\mathbb{N}$, there exist
$C_{\ell}>0$ and $m_{\ell}\in\mathbb{N}$ such that
$$p_{\ell}(a_nb-c)\leq C_{\ell} q_{m_{\ell}}(a_nb-c).$$ Since the
right hand side of the above inequality tends to zero,
$\lim_{n\rightarrow\infty}a_nb=c,$ in the topology of $\mathcal
A$. Also by the continuity of multiplication in $\mathcal A$,
$\lim_{n\rightarrow\infty}a_nb=ab$. It follows that $ab=c$, and so
$ab\in\mathcal B$. This completes the proof.
\end{proof}

\begin{cor}\label{c1}
Let $(\mathcal A, p_{\ell})_{\ell\in {\Bbb N}}$ and $(\mathcal
B,q_m)_{m\in {\Bbb N}}$ be Fr$\acute{e}$chet algebras, such that
$\mathcal B$ is a dense subalgebra of $\mathcal A$ and  both maps
given in \emph{(ii)} and \emph{(iii)} in Definition $\ref{d1}$ are
continuous. Then $\mathcal B$ is a Segal Fr$\acute{e}$chet
algebra.
\end{cor}

\begin{rem}\label{r2}\rm
Let $(\mathcal A, p_{\ell})_{\ell\in {\Bbb N}}$ and $(\mathcal
B,q_m)_{m\in {\Bbb N}}$ be Fr$\acute{e}$chet algebras. The
definition of Segal Fr$\acute{e}$chet algebra may be summarized as
the following:
\begin{enumerate}
\item By Theorem \ref{t1}, part (iii) in Definition \ref{d1} can
be omitted. In fact a Fr$\acute{e}$chet algebra $(\mathcal
B,q_m)_{m\in \mathbb{N}}$ is a Segal Fr$\acute{e}$chet algebra in
Fr$\acute{e}$chet algebra $(\mathcal A, p_{\ell})_{{\ell}\in
\mathbb{N}}$, if the conditions (i) and (ii) in Definition
\ref{d1} are satisfied. \item If ${\mathcal B}$ is a dense
subalgebra of ${\mathcal A}$, then Corollary \ref{c1} mentions
that in Definition \ref{d1}, condition (i) can be obtained by (ii)
and (iii).
\end{enumerate}
\end{rem}

Now we are in a position to provide examples concerning Segal
Fr$\acute{e}$chet algebras. We found many examples in \cite{Sch},
which satisfy the conditions of Segal Fr$\acute{e}$chet algebras
and so can be good examples of our definition.

We explain here Part $(b)$ of \cite[Example 3.3]{Sch}, which is a
nice example in this field.

\begin{exa}\rm
Let $X$ be a infinite countable set. A function $\ell:
X\rightarrow [1,\infty)$ is a scale on $X$. We say that a scale
$\ell$ on $X$ is proper if the inverse map $\ell^{-1}$ takes
bounded subsets of $[1,\infty)$ to finite subsets of $X$. For the
family of scales $\ell=\{\ell^n\}_{n=0}^{\infty}$ on $X$, define
$$
{\mathcal S}_{\ell}^\infty(X)=\left\{\varphi: X\rightarrow \Bbb
C,\;\; \|\varphi\|_n^\infty<\infty, \forall\;n\in\Bbb N\right\},
$$
where
$$
\|\varphi\|_n^{\infty}=\sup_{x\in
X}\left\{\ell^n(x)|\varphi(x)|\right\}.
$$
Then ${\mathcal S}_{\ell}^{\infty}(X)$ is called the sup-norm
$\ell-$rapidly vanishing functions on $X$. The family
$(\ell^n)_{n=0}^{\infty}$ will satisfy $\ell^0\leq\ell^1\leq . . .
\ell^n\leq . . . $, so that the families of norms
$\{\|.\|_n^{\infty}\}_{n=0}^{\infty}$ are increasing. Moreover, it
is easy to see that all of them are submultiplicative under
pointwise product. In fact ${\mathcal S}_{\ell}^\infty(X)$ is a
Fr$\acute{e}$chet algebra. Now consider $c_0(X)$, the commutative
Banach algebra of complex-valued sequences which vanish at
infinity, with pointwise multiplication and sup-norm
$\|.\|_\infty$. We show that ${\mathcal
S}_{\ell}^\infty(X)\subseteq c_0(X)$. Suppose on the contrary that
there exists $\varphi\in {\mathcal S}_{\ell}^\infty(X)$ such that
$\varphi\not\in c_0(X)$. Thus there is $\varepsilon>0$ such that
for each finite subset $F$ of $X$, there exists $x_F\not\in F$
with $|\varphi(x_F)|\geq\varepsilon$. Since $\ell$ is a proper
scale, thus for each $n\in\Bbb N$, there exists
$x_n\not\in\ell^{-1}([1,n])$ such that
$|\varphi(x_n)|\geq\varepsilon$. It follows that
$$
\sup_{x\in X}\left\{\ell(x)|\varphi(x)|\right\}=\infty,
$$
which contradicts the assumption of $\varphi\in {\mathcal
S}_{\ell}^\infty(X)$. Therefore ${\mathcal
S}_{\ell}^\infty(X)\subseteq c_0(X)$. It is easy to see that the
inequalities
$\|\varphi\;\psi\|_n^{\infty}\leq\|\varphi\|_n^{\infty}\|\psi\|_\infty$
are satisfied, for all $\varphi,\psi\in {\mathcal
S}_{\ell}^\infty(X)$. Since ${\mathcal S}_{\ell}^\infty(X)$
contains the space of finite support functions denoted by
$c_{00}(X)$, it follows that ${\mathcal S}_{\ell}^\infty(X)$ is a
dense Fr$\acute{e}$chet ideal in $c_0(X)$, and so ${\mathcal
S}_{\ell}^\infty(X)$ is a Segal Fr$\acute{e}$chet algebra in
$c_0(X)$.
\end{exa}

\section{\bf Main results}

In this section, we prove some other results of \cite{Bar} and
\cite{Burn} for Segal Fr$\acute{e}$chet algebras. We require
recall the following definitions from \cite{Hel3} and \cite{Pir}.

\begin{DEF}\rm
Let $(\mathcal A, p_{\ell})_{\ell\in \mathbb{N}}$ be a
Fr$\acute{e}$chet algebra.
\begin{enumerate}
\item We say that $\mathcal A$ has left (right) approximate units
if for each $x\in \mathcal A$ and $\ell\in \mathbb{N}$ and
$\varepsilon>0$, there exists $u\in \mathcal A$ such that
$p_{\ell}(ux-x)<\varepsilon\;$ ($p_{\ell}(xu-x)<\varepsilon$).
Moreover we say that $\mathcal A$ has a bounded left (right)
approximate units if there exists a bounded subset $B$ of
$\mathcal A$, such that for each $x\in\mathcal A$ and
$\varepsilon>0$ and $\ell\in\mathbb{N}$,
there exists $b\in B$ such that $p_{\ell}(bx-x)<\varepsilon$ \;($p_{\ell}(xb-x)<\varepsilon$).\\
\item $\mathcal A$ has a left (right) multiple approximate
identity if given $\{a_1,...,a_n\}\subseteq\mathcal A$ and
$\varepsilon>0$ and $\ell\in\mathbb{N}$, we may find $b\in\mathcal
A$ such that
$$p_{\ell}(ba_i-a_i)<\varepsilon\;\;\;\;\;\;\;(p_{\ell}(a_ib-a_i)<\varepsilon),$$
for all $i=1,2,...,n$. We further say that $\mathcal A$ has a bounded left (right)
multiple approximate identity if there exists a bounded subset $B$
of $\mathcal A$, such that for each finite subset
$\{a_1,...,a_n\}$ of $\mathcal A$, we may find $b$ in $B$,
satisfying the above inequality. \item A left (right) approximate
identity in $\mathcal A$ is a net
$(e_{\alpha})_{\alpha\in\Lambda}$ in $\mathcal A$, such that
$$\lim_{\alpha}p_{\ell}(e_{\alpha}a-a)=0\;\;\;\;\;\;\;\;(\lim_{\alpha}p_{\ell}(ae_{\alpha}-a)=0),$$
for all $a\in\mathcal A$ and $\ell\in\mathbb{N}$. It is called a
bounded left (right) approximate identity if
$\{e_{\alpha},\;\;\alpha\in\Lambda\}$ is a bounded set in
${\mathcal A}$. We say that $(e_{\alpha})_{\alpha\in\Lambda}$ is
an approximate identity if it is both left and right approximate
identity.
\end{enumerate}
\end{DEF}

We commence with the following result, which is in fact a
generalization of \cite[Proposition 2 page 58]{Bon}, to
Fr$\acute{e}$chet algebras.

\begin{pro}
Let $(\mathcal A, p_{\ell})_{\ell\in {\Bbb N}}$ be a
Fr$\acute{e}$chet algebra. Then the following statements are
equivalent;
\begin{enumerate}
\item[(i)] $\mathcal A$ has bounded left approximate units,
\item[(ii)] $\mathcal A$ has a bounded left multiple approximate
identity, \item[(iii)] $\mathcal A$ has a  bounded left
approximate identity.
\end{enumerate}
\end{pro}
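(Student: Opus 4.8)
The plan is to prove the three implications $(iii)\Rightarrow(ii)\Rightarrow(i)\Rightarrow(iii)$, since $(iii)\Rightarrow(ii)$ and $(ii)\Rightarrow(i)$ are essentially immediate from the definitions, while $(i)\Rightarrow(iii)$ is the substantive part. For $(iii)\Rightarrow(ii)$, if $(e_\alpha)$ is a bounded left approximate identity, then given any finite set $\{a_1,\dots,a_n\}$, any $\ell$, and any $\varepsilon>0$, I would use that $\lim_\alpha p_\ell(e_\alpha a_i - a_i)=0$ for each $i$ to pick, for each $i$, an index past which the estimate holds, and then (since the index set $\Lambda$ is directed) choose a common $\alpha$ dominating all $n$ of them; the single element $b=e_\alpha$ lies in the bounded set $\{e_\alpha\}$ and works simultaneously for all $a_i$. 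For $(ii)\Rightarrow(i)$, a bounded left multiple approximate identity is in particular a bounded left approximate unit, taking the finite set to be the singleton $\{x\}$.

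The heart of the matter is $(i)\Rightarrow(iii)$: from bounded left approximate units I must manufacture a bounded net that is a left approximate identity for all of $\mathcal A$ simultaneously. The standard Banach-algebra device is to index a net by finite subsets and tolerances and to combine units via the algebra structure; the Fréchet subtlety is that the topology requires controlling countably many seminorms at once. Concretely, I would take the index set $\Lambda$ to consist of triples $(F,\ell,\varepsilon)$ where $F$ is a finite subset of $\mathcal A$, $\ell\in\mathbb N$, and $\varepsilon>0$, directed by $(F,\ell,\varepsilon)\preceq(F',\ell',\varepsilon')$ when $F\subseteq F'$, $\ell\le\ell'$, and $\varepsilon'\le\varepsilon$. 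The key step is that, using the hypothesis $(i)$ together with the bridge already established in the proposition chain, I can upgrade approximate units (which handle one $x$ at a time) to elements handling a whole finite set $F$ at a given level $\ell$; passing through the equivalence with a multiple approximate identity is exactly what lets me do this, so I would actually prove $(i)\Rightarrow(ii)$ first as the genuinely nontrivial implication and then get $(iii)$ cheaply.

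Thus I would insert an intermediate argument $(i)\Rightarrow(ii)$. Given $\{a_1,\dots,a_n\}$, $\ell$, and $\varepsilon>0$, I start with a bounded set $B$ furnishing left approximate units and build up a single element treating all $a_i$ at once. The trick is iterative: choose $b_1\in B$ with $p_\ell(b_1 a_1 - a_1)$ small; then choose $b_2\in B$ approximating the finite family $\{a_2-b_1a_2,\dots\}$ or, more cleanly, set $c_1=b_1$ and inductively $c_{k}=b_k + c_{k-1} - b_k c_{k-1}$ so that $e - c_k = (e-b_k)(e-c_{k-1})$ formally, giving $p_\ell(c_k a_i - a_i)$ control for $i\le k$. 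The obstacle is that this standard Banach-space multiplicative combination must be shown to stay bounded in the Fréchet setting and to respect all seminorms $p_\ell$: since the $p_\ell$ are submultiplicative and $B$ is bounded (so $\sup_{b\in B}p_\ell(b)<\infty$ for every $\ell$ by the boundedness characterization recalled in Section~1), each $c_k$ lies in a fixed bounded set depending only on $n$ and $B$, and the telescoping estimate controls $p_\ell(c_n a_i - a_i)$ for all $i$. Once $(i)\Rightarrow(ii)$ is in hand, the net in $(iii)$ is obtained by indexing the multiple-identity elements over $\Lambda$ as above, and boundedness of the net follows because every chosen element lies in the single bounded set produced uniformly by $(ii)$.

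The main obstacle I anticipate is precisely this boundedness bookkeeping in the iterative construction: in a Banach algebra one controls a single norm, but here I must ensure the combined elements $c_k$ remain in one bounded set valid for every seminorm $p_\ell$ simultaneously, which forces the recursion to be carried out with estimates uniform in $\ell$ using submultiplicativity of the family $(p_\ell)_{\ell\in\mathbb N}$.
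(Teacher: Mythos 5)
Your overall skeleton (prove $(i)\Rightarrow(ii)$ as the hard step, get $(iii)$ from $(ii)$ via a net indexed by triples $(F,\ell,\varepsilon)$, and note $(iii)\Rightarrow(ii)\Rightarrow(i)$ are easy) matches the structure of the problem, and your easy implications are fine. But the core step $(i)\Rightarrow(ii)$ has a genuine gap, exactly at the boundedness bookkeeping you flagged as the anticipated obstacle. Your recursion $c_k=b_k+c_{k-1}-b_kc_{k-1}$ gives, by submultiplicativity, only $p_{\ell}(c_k)\leq M_{\ell}+(1+M_{\ell})\,p_{\ell}(c_{k-1})$, hence $p_{\ell}(c_n)\leq (1+M_{\ell})^n-1$: the bound blows up with the cardinality $n$ of the finite set $F$. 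You concede this when you say $c_k$ lies in a bounded set ``depending only on $n$ and $B$,'' but the definition of a \emph{bounded} left multiple approximate identity requires a \emph{single} bounded set that works for finite sets of every size; the union $\bigcup_n W_n$ of your $n$-fold combination sets is not bounded. Your final sentence, that ``boundedness of the net follows because every chosen element lies in the single bounded set produced uniformly by (ii),'' is therefore circular: the uniform bounded set is precisely what the construction fails to produce. What you actually prove is that $\mathcal A$ has a (possibly unbounded) multiple approximate identity, which does not suffice for $(ii)$, nor for the bounded net in $(iii)$.

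The paper's proof avoids this by never iterating the combination: it fixes once and for all the set $W=\{u+v-vu:\;u,v\in B\}$, which is bounded with $\sup_{w\in W}p_{\ell}(w)\leq 2M_{\ell}+M_{\ell}^2$, proves the two-element case with $w\in W$ (choose $u$ with $p_{\ell}(x_1-ux_1)<\varepsilon/(1+M_{\ell})$, then $v$ an approximate unit for the single element $x_2-ux_2$), and then runs the induction by applying this \emph{two-element} case to the pair $\{y,x_{n+1}\}$, where $y\in W$ already handles $\{x_1,\dots,x_n\}$ to accuracy $\varepsilon/(3(1+M_{\ell})^2)$; one picks $w\in W$ with $p_{\ell}(y-wy)<\varepsilon/(3\alpha_{\ell})$, $\alpha_{\ell}=\max_j p_{\ell}(x_j)$, and $p_{\ell}(x_{n+1}-wx_{n+1})<\varepsilon$, and a three-term telescoping estimate $p_{\ell}(x_j-wx_j)\leq p_{\ell}(x_j-yx_j)+p_{\ell}(y-wy)p_{\ell}(x_j)+p_{\ell}(w)p_{\ell}(yx_j-x_j)<\varepsilon$ closes the induction while every output stays in the fixed bounded set $W$, independent of $n$. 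If you replace your recursion by this device, the rest of your plan goes through; note also that the paper simply cites Pirkovskii's Proposition 5.2 for $(ii)\Leftrightarrow(iii)$, whereas your direct net argument for $(ii)\Rightarrow(iii)$ is a legitimate alternative (it uses that the seminorms $p_{\ell}$ are increasing to make the index set directed in $\ell$).
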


\begin{proof}
$(i)\Rightarrow (ii)$. By the hypothesis, there exists a bounded
subset $B$ of $\mathcal A$ such that for each $\ell\in {\Bbb N}$,
$\sup_{b\in B}p_{\ell}(b)\leq M_{\ell},$ for some $M_{\ell}>0$.
Moreover, for each $x\in\mathcal A$ and $\varepsilon>0$ and
$\ell\in\mathbb{N}$, there exists $b\in B$ with
$p_{\ell}(bx-x)<\varepsilon.$ Set $W=\{u+v-vu:\;\;u,v\in B\}.$ We
show that for each finite subset $F$ of $\mathcal A$ and
$\ell\in\mathbb{N}$ and $\varepsilon>0$, there exists $w\in W$
such that for all $x\in F$, $p_{\ell}(wx-x)<\varepsilon$. We prove
it inductively. First let $F=\{x_1,x_2\}$. So for each
$\ell\in\mathbb{N}$, there exist $u,v\in B$ such that
$$p_{\ell}(x_1-ux_1) <\frac{\varepsilon}{1+M_{\ell}}$$ and
$$p_{\ell}((x_2-ux_2)-v(x_2-ux_2))<\varepsilon.$$ Assuming
$w=u+v-vu$, we have $p_{\ell}(x_j-wx_j)<\varepsilon$, for $j=1,2$.
Now suppose that $(ii)$ holds for $\{x_1,...,x_n\}$ and consider
the finite subset $F=\{x_1,...,x_{n+1}\}$ of $\mathcal A$. Let
$\alpha_{\ell}=\max\{p_{\ell}(x_j):\;\;j=1,...,n\}.$ Thus for each
$\ell\in\mathbb{N}$ there exists $y\in W$ such that
$$p_{\ell}(x_j-yx_j)<\frac{\varepsilon}{3(1+M_{\ell})^2}\;\;,\;\;(j=1,2,...,n).$$
One can choose $w\in W$ such that
$$p_{\ell}(y-wy)<\frac{\varepsilon}
{3\alpha_{\ell}}\;\;\;\;and\;\;\;\;p_{\ell}(x_{n+1}-wx_{n+1})<\varepsilon.$$
Thus for each $j\in\{1,2,...,n\}$, we have
\begin{eqnarray*}
p_{\ell}(x_j-wx_j)&\leq& p_{\ell}(x_j-yx_j)+p_{\ell}(yx_j-wyx_j)+
p_{\ell}(wyx_j-wx_j)\\&\leq&
p_{\ell}(x_j-yx_j)+p_{\ell}(y-wy)p_{\ell}(x_j)+
p_{\ell}(w)p_{\ell}(yx_j-x_j)\\&<&\frac{\varepsilon}{3(1+M_{\ell})^2}+
\frac{\varepsilon}{3}+\frac{\varepsilon}{3(1+M_{\ell})^2}
(2M_{\ell}+M_{\ell}^2)<\varepsilon,
\end{eqnarray*}
which is our claim.

$(ii)\Rightarrow(i)$ is clear.\\
By \cite[Proposition $5.2$]{Pir}, $(ii)$ and $(iii)$, are
equivalent.
\end{proof}

The following result is interesting in its own right. It is a
generalization of a result due to Burnham \cite{Burn}, for Segal
Fr$\acute{e}$chet algebras. It shows that the condition (ii) in
Definition \ref{d1}, can be obtained from the conditions (i) and
(iii).

\begin{pro}\label{p3}
Let $(\mathcal A, p_{\ell})$ be a Fr$\acute{e}$chet algebra with a
right approximate identity denoted by $(e_\alpha)_\alpha$, and
$(\mathcal B, q_m)$ be a Fr$\acute{e}$chet algebra, such that
$\mathcal B$ is a dense left ideal in $(\mathcal A, p_{\ell})$.
Also suppose that the map
$$
(\mathcal A, p_{\ell})\times (\mathcal B, q_m)\longrightarrow
(\mathcal B, q_m),\;\;\;(a,b)\mapsto  ab,\;\;\;(a\in\mathcal A,
b\in\mathcal B)
$$
is continuous. Then $(\mathcal B, q_m)$ is a Segal
Fr$\acute{e}$chet algebra in $(\mathcal A, p_{\ell})$.
\end{pro}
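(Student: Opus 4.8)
The plan is to verify the three conditions of Definition \ref{d1} for $(\mathcal B, q_m)$ in $(\mathcal A, p_\ell)$. Condition (i) is exactly the hypothesis that $\mathcal B$ is a dense left ideal, and condition (iii) is immediate: the jointly continuous map $(\mathcal B, p_\ell)\times(\mathcal B,q_m)\to(\mathcal B,q_m)$ required there is simply the restriction to $\mathcal B\times\mathcal B$ of the continuous map $(\mathcal A,p_\ell)\times(\mathcal B,q_m)\to(\mathcal B,q_m)$ assumed in the statement. Thus the whole weight of the proposition rests on establishing condition (ii), namely that the inclusion $i:(\mathcal B,q_m)\to(\mathcal A,p_\ell)$ is continuous.

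To prove (ii) I would invoke the closed graph theorem for Fr$\acute{e}$chet spaces recalled before Theorem \ref{t1}, exactly as in the proof of that theorem. So I would take a sequence $(b_n)$ in $\mathcal B$ together with $b\in\mathcal B$ and $a\in\mathcal A$ such that $q_m(b_n-b)\to 0$ for every $m$ and $p_\ell(b_n-a)\to 0$ for every $\ell$, with the aim of showing $a=b$. Passing to $d_n=b_n-b$ and $a'=a-b$, this reduces to the assertion that if $d_n\to 0$ in $(\mathcal B,q_m)$ and $d_n\to a'$ in $(\mathcal A,p_\ell)$, then $a'=0$.

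The crux of the argument, and the step I expect to be the main obstacle, is to pin down $a'$; the device is to produce two competing limits for the products $d_nb'$. Fix an arbitrary $b'\in\mathcal B$. On one hand, submultiplicativity of each $q_m$ gives $q_m(d_nb')\le q_m(d_n)q_m(b')\to 0$, so $d_nb'\to 0$ in $(\mathcal B,q_m)$. On the other hand, the separate continuity of the assumed map, that is continuity of $x\mapsto xb'$ from $(\mathcal A,p_\ell)$ into $(\mathcal B,q_m)$ as recorded in Remark \ref{r1}, together with $d_n\to a'$ in $(\mathcal A,p_\ell)$, yields $d_nb'\to a'b'$ in $(\mathcal B,q_m)$; note $a'b'\in\mathcal B$ because $\mathcal B$ is a left ideal. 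Since $(\mathcal B,q_m)$ is Hausdorff, the two limits coincide and $a'b'=0$. As $b'\in\mathcal B$ was arbitrary, $a'\mathcal B=\{0\}$.

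It then remains to upgrade this to $a'=0$, and here the density and the right approximate identity enter. Left multiplication $L_{a'}:x\mapsto a'x$ is continuous on $(\mathcal A,p_\ell)$ since $\mathcal A$ is a Fr$\acute{e}$chet algebra, and it vanishes on the dense subset $\mathcal B$; hence $L_{a'}\equiv 0$, that is $a'x=0$ for every $x\in\mathcal A$. Applying this to the right approximate identity and using $a'e_\alpha\to a'$ in $(\mathcal A,p_\ell)$ forces $a'=\lim_\alpha a'e_\alpha=0$. This closes the graph of $i$, so $i$ is continuous and (ii) holds; together with (i) and (iii) this shows that $(\mathcal B,q_m)$ is a Segal Fr$\acute{e}$chet algebra in $(\mathcal A,p_\ell)$.
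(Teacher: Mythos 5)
Your proposal is correct and follows essentially the same route as the paper's own proof: a closed graph argument for the inclusion $i:(\mathcal B,q_m)\to(\mathcal A,p_\ell)$, where the limit $a'$ is killed by comparing the two limits of $d_nb'$ (zero via submultiplicativity of the $q_m$, and $a'b'$ via the hypothesized continuous module map), then extending $a'\mathcal B=\{0\}$ to $a'\mathcal A=\{0\}$ by density and concluding $a'=0$ from $a'e_\alpha\to a'$. Your explicit verification of conditions (i) and (iii) and the reduction to $d_n=b_n-b$ are just minor bookkeeping the paper leaves implicit.
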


\begin{proof}
It is sufficient to show that the map \eqref{e1} is continuous. We
will use Closed graph theorem \cite[Theorem 8.8]{ME} for
Fr$\acute{e}$chet algebras. Let $(b_n)$ be a sequence in $\mathcal
B$ such that $b_n\rightarrow 0$, in the topology of $\mathcal B$
and $b_n\rightarrow c$, for some $c\in\mathcal A$, in the topology
of $\mathcal A$. Since $(\mathcal B, q_m)$ is a Fr$\acute{e}$chet
algebra thus for every $b\in\mathcal B$, $b_nb\rightarrow 0$, in
the topology of $\mathcal B$. By the hypothesis, $b_nb\rightarrow
cb$, in the topology of $\mathcal B$. Thus $cb=0$, for all
$b\in\mathcal B$. Density of $\mathcal B$ in $\mathcal A$ implies
that $ca=0$, for all $a\in\mathcal A$. It follows that
$ce_\alpha=0$, for all $\alpha$. Since
$ce_\alpha\rightarrow_\alpha c$, in the topology of $\mathcal A$,
therefore $c=0$. Thus the result is obtained.
\end{proof}

\begin{cor}
Let $(\mathcal A, p_{\ell})$ be a Fr$\acute{e}$chet algebra with
an approximate identity, and $(\mathcal B, q_m)$ be a
Fr$\acute{e}$chet algebra, such that $\mathcal B$ is a dense
two-sided ideal in $(\mathcal A, p_{\ell})$. Also suppose that the
maps
$$
(\mathcal A, p_{\ell})\times (\mathcal B, q_m)\longrightarrow
(\mathcal B, q_m),\;\;\;(a,b)\mapsto  ab,\;\;\;(a\in\mathcal A,
b\in\mathcal B)
$$
and
$$
(\mathcal B, q_m)\times (\mathcal A, p_{\ell}) \longrightarrow
(\mathcal B, q_m),\;\;\;(b,a)\mapsto ba,\;\;\;(a\in\mathcal A,
b\in\mathcal B)
$$
are continuous. Then $(\mathcal B, q_m)$ is a symmetric Segal
Fr$\acute{e}$chet algebra in $(\mathcal A, p_{\ell})$.
\end{cor}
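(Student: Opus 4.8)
The plan is to reduce everything to Proposition \ref{p3} and then supply the two extra ingredients that distinguish a \emph{symmetric} Segal Fr\'echet algebra from an ordinary one: the two-sided density (already assumed) and the joint continuity of the right action $(\mathcal B, q_m)\times(\mathcal B, p_{\ell})\to(\mathcal B,q_m)$, $(a,b)\mapsto ab$, appearing in the last clause of Definition \ref{d1}.

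First I would check that every hypothesis of Proposition \ref{p3} is in force. Since $(\mathcal A, p_{\ell})$ has an approximate identity, it has in particular a right approximate identity; since $\mathcal B$ is a dense two-sided ideal, it is in particular a dense left ideal; and the continuity of the left action $(\mathcal A, p_{\ell})\times(\mathcal B, q_m)\to(\mathcal B, q_m)$, $(a,b)\mapsto ab$, is assumed. Proposition \ref{p3} then gives at once that $(\mathcal B, q_m)$ is a Segal Fr\'echet algebra in $(\mathcal A, p_{\ell})$; concretely, the embedding \eqref{e1} is continuous, and the left action \eqref{e2} is jointly continuous, the latter being simply the restriction of the assumed map to $(\mathcal B, p_{\ell})\times(\mathcal B, q_m)$. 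Hence conditions (i), (ii), (iii) of Definition \ref{d1} all hold, and by hypothesis $\mathcal B$ is moreover a dense two-sided ideal.

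It then remains only to verify the extra clause of the symmetric definition, namely the joint continuity of $(\mathcal B, q_m)\times(\mathcal B, p_{\ell})\to(\mathcal B, q_m)$, $(a,b)\mapsto ab$. For this I would restrict the assumed continuous right action $(\mathcal B, q_m)\times(\mathcal A, p_{\ell})\to(\mathcal B, q_m)$, $(b,a)\mapsto ba$, to the dense subspace $\mathcal B$ of $\mathcal A$ carrying the inherited seminorms $p_{\ell}$. A jointly continuous bilinear map remains jointly continuous when one of its factors is replaced by a subspace equipped with the induced topology, so after the evident relabelling of the two arguments this restriction is precisely the required map. This completes the verification, and $(\mathcal B, q_m)$ is a symmetric Segal Fr\'echet algebra.

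I do not expect any genuine obstacle: the substance is entirely packaged inside Proposition \ref{p3}, and the only care needed is bookkeeping — tracking which copy of $\mathcal B$ carries the $q_m$-topology and which the inherited $p_{\ell}$-topology, and observing that the symmetric clause coincides, after relabelling, with the restriction of the hypothesized right action. If one preferred to avoid invoking joint continuity of the restriction directly, one could instead argue via separate continuity together with the coincidence of separate and joint continuity for Fr\'echet spaces, as already exploited in Remark \ref{r1}; but the restriction argument is the cleanest route.
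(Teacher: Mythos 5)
Your proof is correct and takes exactly the route the paper intends: the corollary is stated there without proof as an immediate consequence of Proposition \ref{p3}, and your argument --- invoking Proposition \ref{p3} (via the right approximate identity and the dense left-ideal property implied by the hypotheses) to get continuity of the embedding \eqref{e1}, then obtaining \eqref{e2} and the symmetric clause of Definition \ref{d1} by restricting the two hypothesized module actions to $\mathcal B\subseteq\mathcal A$ with the induced $p_{\ell}$-topology --- is precisely that omitted verification.
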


It is known that proper abstract Segal algebras never admit a
bounded approximate identity. In the following result we obtain
the same result for Segal Fr$\acute{e}$chet algebras.

\begin{pro}
Let $(\mathcal B, q_{m})_{m\in\mathbb{N}}$ be a proper Segal
Fr$\acute{e}$chet algebra in Fr$\acute{e}$chet algebra $(\mathcal
A, p_{\ell})_{\ell\in\mathbb{N}}$, with an approximate identity
$(e_{\alpha})_{\alpha\in \Lambda}$. Then $(e_{\alpha})_{\alpha\in
\Lambda}$ is not bounded in $\mathcal B$.
\end{pro}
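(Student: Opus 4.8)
The plan is to argue by contradiction: assuming the net $(e_\alpha)$ is bounded in $(\mathcal B, q_m)$, I would show that $\mathcal A \subseteq \mathcal B$, which forces $\mathcal A = \mathcal B$ and contradicts the properness of the Segal Fr\'echet algebra. Throughout I would work with the seminorm reformulations recorded in Remark \ref{r1}: condition (ii) gives, for each $\ell$, constants $M_\ell > 0$ and $m_\ell \in \mathbb N$ with $p_\ell(b) \le M_\ell q_{m_\ell}(b)$ for all $b \in \mathcal B$, while the extended module action gives, for each $m$, constants $K_m > 0$ and $\ell_m, n_m \in \mathbb N$ with $q_m(ab) \le K_m p_{\ell_m}(a) q_{n_m}(b)$ for all $a \in \mathcal A$ and $b \in \mathcal B$. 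The boundedness hypothesis means $\sup_\alpha q_m(e_\alpha) < \infty$ for every $m$; combined with condition (ii) this also yields $\sup_\alpha p_\ell(e_\alpha) < \infty$ for every $\ell$, so that $(e_\alpha)$ is bounded in $\mathcal A$ as well.

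Fix $a \in \mathcal A$. Since $\mathcal B$ is a left ideal, $a e_\alpha \in \mathcal B$ for every $\alpha$, and I would first show that the net $(a e_\alpha)_\alpha$ is Cauchy in $(\mathcal B, q_m)$. Fixing $m$ and $\varepsilon > 0$, density of $\mathcal B$ in $\mathcal A$ lets me choose $b \in \mathcal B$ with $p_{\ell_m}(a - b)$ small; the module estimate then gives $q_m(a e_\alpha - b e_\alpha) = q_m((a-b)e_\alpha) \le K_m \, p_{\ell_m}(a-b) \, \sup_\gamma q_{n_m}(e_\gamma) < \varepsilon$ uniformly in $\alpha$. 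Because $(e_\alpha)$ is an approximate identity for $\mathcal B$, the net $(b e_\alpha)_\alpha$ converges and is therefore $q_m$-Cauchy, and a standard triangle-inequality estimate transfers this to $(a e_\alpha)_\alpha$. As this holds for every $m$, the net $(a e_\alpha)$ is Cauchy in the Fr\'echet space $\mathcal B$ and hence converges to some $c \in \mathcal B$.

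It remains to identify $c$ with $a$, and this is the step I expect to be the main obstacle, since $(e_\alpha)$ is only assumed to be an approximate identity for $\mathcal B$, not for $\mathcal A$. On one side, condition (ii) gives $p_\ell(a e_\alpha - c) \le M_\ell q_{m_\ell}(a e_\alpha - c) \to 0$, so $a e_\alpha \to c$ in $\mathcal A$. On the other side, I would promote $(e_\alpha)$ to a right approximate identity for all of $\mathcal A$ by a density argument that crucially exploits its boundedness in $\mathcal A$ together with the submultiplicativity of $p_\ell$: given $a$, $\ell$ and $\varepsilon$, choosing $b \in \mathcal B$ with $p_\ell(a-b)$ small and estimating $p_\ell(a e_\alpha - a) \le p_\ell(a-b)\, p_\ell(e_\alpha) + p_\ell(b e_\alpha - b) + p_\ell(b - a)$, where $b e_\alpha \to b$ in $\mathcal A$ handles the middle term and boundedness controls the first, yields $a e_\alpha \to a$ in $\mathcal A$. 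Uniqueness of limits in the Hausdorff space $\mathcal A$ then forces $c = a$, whence $a = c \in \mathcal B$. Since $a \in \mathcal A$ was arbitrary, $\mathcal A = \mathcal B$, contradicting properness and completing the proof.
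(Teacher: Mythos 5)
Your argument is correct, but it takes a genuinely different route from the paper's. The paper stays entirely at the level of seminorm inequalities: from $q_m(b)\leq q_m(be_\alpha)+\varepsilon$, the module estimate $q_m(be_\alpha)\leq C_m p_{\ell_m}(b)q_{n_m}(e_\alpha)$, and the assumed bound $q_{n_m}(e_\alpha)\leq K_{n_m}$, it extracts the uniform domination $q_m(b)\leq C_m K_{n_m}p_{\ell_m}(b)$ for all $b\in\mathcal B$, so the topology of $(\mathcal B,q_m)$ coincides with the one induced from $\mathcal A$; completeness of $\mathcal B$ then makes it closed in $\mathcal A$, and density forces $\mathcal B=\mathcal A$. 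You never derive this inequality; instead you prove elementwise that each $a\in\mathcal A$ lies in $\mathcal B$, via a factorization-style argument: the net $(ae_\alpha)$ is $q_m$-Cauchy (this is where you use boundedness of $(e_\alpha)$ in $\mathcal B$, through $q_m((a-b)e_\alpha)\leq K_m p_{\ell_m}(a-b)\sup_\gamma q_{n_m}(e_\gamma)$), hence converges to some $c\in\mathcal B$, and $c=a$ after you promote $(e_\alpha)$ to a right approximate identity of $\mathcal A$ -- which is exactly the paper's Proposition \ref{p1} in mirror image, and which you correctly re-derive using boundedness of $(e_\alpha)$ in $\mathcal A$ (obtained from boundedness in $\mathcal B$ via condition (ii)). Your route costs more: it needs the extension of the module action to $\mathcal A\times\mathcal B$ from Remark \ref{r1}(2) (including that the extension agrees with the actual product, since $\mathcal B$ is a left ideal), the fact that Cauchy nets converge in a complete metrizable space, and the promotion lemma. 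What it buys is transparency: the paper's closing sentence (``Since $\mathcal B$ is dense in $\mathcal A$, it follows that $\mathcal A=\mathcal B$'') silently compresses precisely the completeness-plus-density reasoning that your proof spells out explicitly, so your version can be read as the paper's argument with its terse final step fully unpacked, at the price of the detour through $(ae_\alpha)$ rather than the shorter two-line seminorm comparison.
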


\begin{proof}
Suppose on the contrary that $(e_{\alpha})_{\alpha\in \Lambda}$ is
bounded in $\mathcal B$. Thus for each $m\in\mathbb{N}$,
$\sup_{\alpha\in\Lambda}q_m(e_{\alpha})<K_m,$ for some $K_m>0$. On
the other hand by Remark \ref{r1}, for every $m\in\mathbb{N}$,
there exist $C_m>0$ and $\ell_{m}, n_m\in\mathbb{N}$ such that
$$q_m(ab)\leq C_m p_{{\ell}_{m}}(a)q_{n_m}(b),\;\;\;\;\;\;\;\;\;(a,b\in\mathcal B).$$
Since $(e_{\alpha})$ is a bounded right  approximate identity for
$\mathcal B$, for every $b\in {\mathcal B}$, $\varepsilon>0$ and
$m\in\mathbb{N}$, there exists $\alpha_{m}\in\Lambda$ such that
for each $\alpha\geq\alpha_{m}$, we have
$$
q_m(b)\leq q_m(be_{\alpha})+\varepsilon\leq C_m
p_{\ell_{m}}(b)q_{n_m} (e_{\alpha})+\varepsilon\leq C_m
K_{n_m}p_{\ell_{m}}(b)+\varepsilon.
$$
It follows that for each $b\in\mathcal B$,
$$
q_m(b)\leq C_m K_{n_m}p_{\ell_{m}}(b)+\varepsilon.
$$
Since $\mathcal B$ is dense in $\mathcal A$, it follows that
$\mathcal A=\mathcal B$, which is a contradiction.
\end{proof}

\begin{pro}\label{p1}
Let $(\mathcal A, p_{\ell})_{\ell\in\mathbb{N}}$ be a
Fr$\acute{e}$chet algebra and $\mathcal B$ be a dense subalgebra
of $\mathcal A$. Moreover suppose that $\mathcal B$ has a bounded
left approximate identity $(e_{\alpha})_{\alpha\in\Lambda}$. Then
$(e_{\alpha})_{\alpha\in\Lambda}$ is a bounded left approximate
identity for $\mathcal A$.
\end{pro}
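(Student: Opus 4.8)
The plan is to run a standard density-plus-boundedness approximation argument, exploiting that the seminorms $p_{\ell}$ are submultiplicative. Here $\mathcal B$ carries the topology inherited from $\mathcal A$, so the hypothesis means that $(e_{\alpha})$ is a net in $\mathcal B$ with $\lim_{\alpha}p_{\ell}(e_{\alpha}b-b)=0$ for all $b\in\mathcal B$ and $\ell\in\mathbb{N}$, and that $\{e_{\alpha}:\alpha\in\Lambda\}$ is bounded in $\mathcal A$. First I would extract from boundedness the key quantities: for each $\ell\in\mathbb{N}$ set
$$M_{\ell}:=\sup_{\alpha\in\Lambda}p_{\ell}(e_{\alpha})<\infty.$$
The boundedness half of the conclusion is then exactly the hypothesis, so the entire content is to promote the approximate-identity property from $\mathcal B$ to all of $\mathcal A$, i.e.\ to show $\lim_{\alpha}p_{\ell}(e_{\alpha}a-a)=0$ for every $a\in\mathcal A$ and $\ell\in\mathbb{N}$.

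To that end I would fix $a\in\mathcal A$, $\ell\in\mathbb{N}$ and $\varepsilon>0$, and use density of $\mathcal B$ in $\mathcal A$ to choose $b\in\mathcal B$ with
$$p_{\ell}(a-b)<\frac{\varepsilon}{2(1+M_{\ell})}.$$
The triangle inequality then splits the target expression as
$$p_{\ell}(e_{\alpha}a-a)\leq p_{\ell}(e_{\alpha}(a-b))+p_{\ell}(e_{\alpha}b-b)+p_{\ell}(b-a).$$
For the middle term I would invoke the hypothesis that $(e_{\alpha})$ is a left approximate identity for $\mathcal B$ together with $b\in\mathcal B$, which furnishes an $\alpha_{0}\in\Lambda$ with $p_{\ell}(e_{\alpha}b-b)<\varepsilon/2$ for all $\alpha\geq\alpha_{0}$.

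The decisive step, and the only place boundedness is genuinely used, is the first term: since each $p_{\ell}$ is submultiplicative we have
$$p_{\ell}(e_{\alpha}(a-b))\leq p_{\ell}(e_{\alpha})\,p_{\ell}(a-b)\leq M_{\ell}\,p_{\ell}(a-b),$$
which is a bound uniform in $\alpha$. Combining this with the third term gives $(1+M_{\ell})\,p_{\ell}(a-b)<\varepsilon/2$, so $p_{\ell}(e_{\alpha}a-a)<\varepsilon$ for all $\alpha\geq\alpha_{0}$, establishing the claim and hence that $(e_{\alpha})$ is a bounded left approximate identity for $\mathcal A$.

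I expect the main obstacle to be precisely this uniform estimate. Without the uniform bound $M_{\ell}$ there is no way to control $p_{\ell}(e_{\alpha}(a-b))$ simultaneously across all $\alpha$, and the approximation collapses; this is exactly why a \emph{bounded} approximate identity (rather than an arbitrary one) is indispensable, and it dovetails with the earlier observation that proper Segal Fr\'echet algebras cannot possess one. I would also remark that the increasing, submultiplicative lmc structure is what legitimises treating each seminorm in isolation, so the argument proceeds one index $\ell$ at a time and never requires comparing distinct seminorms.
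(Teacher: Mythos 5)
Your proof is correct and takes essentially the same route as the paper's: both choose $b\in\mathcal B$ close to $a$ by density, split $p_{\ell}(e_{\alpha}a-a)$ into three terms by the triangle inequality, and use submultiplicativity of $p_{\ell}$ together with the uniform bound $\sup_{\alpha}p_{\ell}(e_{\alpha})<\infty$ to control the term $p_{\ell}(e_{\alpha}(a-b))$ uniformly in $\alpha$. The only differences are cosmetic: you use an $\varepsilon/2$ split with the constant $1+M_{\ell}$, while the paper uses an $\varepsilon/3$ split after normalizing its bound $K_{\ell}\geq 1$.
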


\begin{proof}
Since $(e_{\alpha})_{\alpha\in\Lambda}$ is bounded in $\mathcal
A$, we have $p_{\ell}(e_{\alpha})\leq K_{\ell}$ for each
$\ell\in\mathbb{N}$ and some $K_{\ell}>0$. Clearly, we may assume
that $K_{\ell}\geq1$. Let $a\in \mathcal A$ and $\ell\in {\Bbb
N}$. Since $\mathcal B$ is dense in $\mathcal A$, for each
$\varepsilon>0$, there exists $b\in\mathcal B$ such that
$$p_{\ell}(b-a)<\frac{\varepsilon}{3K_{\ell}}.$$
On the other hand since $(e_{\alpha})_{\alpha\in\Lambda}$ is a
left approximate identity for $\mathcal B$, there exists
$\alpha_{\ell}\in\Lambda$ such that for each
$\alpha\geq\alpha_{\ell}$,
$$p_{\ell}(e_{\alpha}b-b)<\frac{\varepsilon}{3}.$$
Thus for each $\alpha\geq\alpha_{\ell}$ we have
\begin{eqnarray*}
p_{\ell}(e_{\alpha}a-a)&\leq&p_{\ell}(e_{\alpha}a-e_{\alpha}b)+p_{\ell}(e_{\alpha}b-b)+p_{\ell}(b-a)\\&\leq&
p_{\ell}(e_{\alpha})p_{\ell}(a-b)+p_{\ell}(e_{\alpha}b-b)+p_{\ell}(b-a)\\&\leq&
K_{\ell}p_{\ell}(a-b)+p_{\ell}(e_{\alpha}b-b)+p_{\ell}(b-a)\\&<&
K_{\ell}\frac{\varepsilon}{3K_{\ell}}+\frac{\varepsilon}{3}+\frac{\varepsilon}{3K_{\ell}}\leq\varepsilon.
\end{eqnarray*}
Consequently $(e_{\alpha})_{\alpha\in\Lambda}$ is a left bounded
approximate identity for $\mathcal A$.
\end{proof}

The following corollary is immediately obtained by Proposition
\ref{p1}.

\begin{cor}
Let $(\mathcal B, q_{m})_{m\in\mathbb{N}}$ be a Segal
Fr$\acute{e}$chet algebra in Fr$\acute{e}$chet algebra $(\mathcal
A, p_{\ell})_{\ell\in\mathbb{N}}$. Moreover suppose that $\mathcal
B$ has a left approximate identity
$(e_{\alpha})_{\alpha\in\Lambda}$ bounded in $\mathcal A$. Then
$(e_{\alpha})_{\alpha\in\Lambda}$ is a bounded left approximate
identity for $\mathcal A$.
\end{cor}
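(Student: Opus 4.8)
The plan is to deduce this corollary from Proposition \ref{p1}, whose hypotheses require that $\mathcal{B}$ be a dense subalgebra of $\mathcal{A}$ carrying a bounded left approximate identity \emph{measured with respect to the seminorms} $(p_{\ell})$ of $\mathcal{A}$. The data supplied by the corollary is phrased differently: the net $(e_{\alpha})$ is a left approximate identity for $\mathcal{B}$ in the $q_m$-topology and is only assumed to be bounded in $\mathcal{A}$. Hence the essential work is to translate the $q_m$-information into $p_{\ell}$-information so that Proposition \ref{p1} becomes applicable.

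First I would record that, since $(\mathcal{B}, q_m)$ is a Segal Fr\'echet algebra in $(\mathcal{A}, p_{\ell})$, condition (i) of Definition \ref{d1} makes $\mathcal{B}$ a dense left ideal, and in particular a dense subalgebra, of $\mathcal{A}$. This secures the structural hypothesis of Proposition \ref{p1}.

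Next I would invoke Remark \ref{r1}(1): continuity of the embedding $i$ in \eqref{e1} yields, for each $\ell \in \mathbb{N}$, constants $M_{\ell} > 0$ and $m_{\ell} \in \mathbb{N}$ with $p_{\ell}(b) \leq M_{\ell}\, q_{m_{\ell}}(b)$ for all $b \in \mathcal{B}$. Applying this to the element $e_{\alpha}b - b$ gives $p_{\ell}(e_{\alpha}b - b) \leq M_{\ell}\, q_{m_{\ell}}(e_{\alpha}b - b)$ for every $b \in \mathcal{B}$ and every $\alpha$. Because $(e_{\alpha})$ is a left approximate identity for $\mathcal{B}$ in the $q_m$-topology, the right-hand side tends to zero along $\alpha$, whence $p_{\ell}(e_{\alpha}b - b) \to 0$ for each $\ell$ and each $b \in \mathcal{B}$. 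Thus $(e_{\alpha})$ is a left approximate identity for $\mathcal{B}$ with respect to the seminorms $(p_{\ell})$.

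Finally, combining this with the hypothesis that $\{e_{\alpha} : \alpha \in \Lambda\}$ is bounded in $\mathcal{A}$, I would conclude that $(e_{\alpha})$ is a bounded left approximate identity for $\mathcal{B}$ precisely in the sense demanded by Proposition \ref{p1}; applying that proposition then delivers that $(e_{\alpha})$ is a bounded left approximate identity for $\mathcal{A}$, completing the argument. There is no genuinely hard step here, which is why the result is stated as a corollary; the only point requiring care is keeping track of which family of seminorms each convergence and boundedness assertion refers to, and the embedding inequality furnished by Remark \ref{r1} is exactly the device that bridges the $q_m$- and $p_{\ell}$-topologies.
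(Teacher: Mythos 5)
Your proposal is correct and matches the paper's route exactly: the paper derives this corollary immediately from Proposition \ref{p1}, and the only content to supply is precisely the translation step you carry out, namely using the embedding inequality $p_{\ell}(b)\leq M_{\ell}\,q_{m_{\ell}}(b)$ from Remark \ref{r1} to convert the $q_m$-approximate identity property into the $p_{\ell}$-approximate identity property, after which boundedness in $\mathcal{A}$ makes Proposition \ref{p1} applicable. Your bookkeeping of which seminorm family governs each convergence and boundedness claim is exactly the care the argument requires.
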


Closed left (right) ideals in symmetric abstract Segal algebras
have been characterized. In fact by ideal theorem, for every
symmetric Segal algebra $S(G)$, every closed left (right) ideal
$I$ of $S(G)$ is of the form $J\cap S(G)$, where $J$ is a unique
closed left (right) ideal of $L^1(G)$. We refer to \cite{f1} and
also \cite{re} for the basic definition of Segal algebras and also
all the required information about ideal theorem. Moreover this
theorem has been proved for abstract Segal algebras; see
\cite[Theorem 3.1]{Bar} and \cite[Theorem 3.2]{Bar}. As the final
result of the present work, we prove this result for Segal
Fr$\acute{e}$chet algebras.

\begin{thm}\label{t2}
Let $(\mathcal B,q_m)_{m\in {\Bbb N}}$ be a symmetric Segal
Fr$\acute{e}$chet algebra in Fr$\acute{e}$chet algebra $(\mathcal
A,p_{\ell})_{\ell\in\mathbb{N}}$. Then the following statements hold.
\begin{enumerate}
\item[(a)] If $J$ is a left ideal in $\mathcal A$, then
$cl_{\mathcal A}(J)$(closure of $J$ in $\mathcal A$) is a closed
left ideal in $\mathcal A$. \item[(b)] If $J$ is a left ideal in
$\mathcal A$, then $cl_{\mathcal A}(J)\cap \mathcal B$ is a closed
left ideal in $\mathcal B$. \item[(c)] If $I$ is a left ideal in
$\mathcal B$, then $cl_{\mathcal A}(I)$ is a closed left ideal in
$\mathcal A$. \item[(d)] If $I$ is a closed left ideal in
$\mathcal B$ and $\mathcal B$ has left approximate units, then
$I=cl_{\mathcal A}(I)\cap\mathcal B$.
\end{enumerate}
\end{thm}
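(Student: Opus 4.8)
The plan is to handle the four parts in order, relying on the seminorm estimates recorded in Remark \ref{r1} together with one extra bound that the \emph{symmetric} hypothesis supplies: applying the reasoning of Remark \ref{r1} to the second multiplication map of Definition \ref{d1}, for every $m$ there are $K_m'>0$ and $n_m',\ell_m'\in\mathbb N$ with $q_m(ba)\le K_m'\,q_{n_m'}(b)\,p_{\ell_m'}(a)$ for all $b\in\mathcal B$ and $a\in\mathcal A$ (the right-module version of the left-module bound $q_m(ab)\le K_m p_{\ell_m}(a)q_{n_m}(b)$). Parts (a) and (b) are then purely topological. For (a), $cl_{\mathcal A}(J)$ is a closed subspace, and for $a\in\mathcal A$ and $x\in cl_{\mathcal A}(J)$ I pick $x_n\in J$ with $x_n\to x$ in $\mathcal A$; then $ax_n\in J$ and $ax_n\to ax$ by joint continuity of the product in $\mathcal A$, so $ax\in cl_{\mathcal A}(J)$. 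For (b), the set $cl_{\mathcal A}(J)\cap\mathcal B$ is a left ideal of $\mathcal B$ because $cl_{\mathcal A}(J)$ is a left ideal of $\mathcal A$ by (a) and $\mathcal B$ is a subalgebra, giving $\mathcal B\bigl(cl_{\mathcal A}(J)\cap\mathcal B\bigr)\subseteq cl_{\mathcal A}(J)\cap\mathcal B$; and it is closed in $\mathcal B$ since it equals $i^{-1}\bigl(cl_{\mathcal A}(J)\bigr)$, the preimage of a closed set under the continuous inclusion $i$ of Definition \ref{d1}(ii).

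For (c) the key step is to first establish $\mathcal A\cdot I\subseteq cl_{\mathcal A}(I)$. Given $a\in\mathcal A$ and $x\in I\subseteq\mathcal B$, density of $\mathcal B$ supplies $b_n\in\mathcal B$ with $b_n\to a$ in $\mathcal A$; then $b_nx\in I$ because $I$ is a left ideal of $\mathcal B$, and $b_nx\to ax$ in $\mathcal A$ by continuity of the product, so $ax\in cl_{\mathcal A}(I)$. The general case $x\in cl_{\mathcal A}(I)$ follows by approximating $x$ by a sequence from $I$ and using that $cl_{\mathcal A}(I)$ is closed, exactly as in (a). Hence $cl_{\mathcal A}(I)$ is a closed left ideal of $\mathcal A$.

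Part (d) is the substance of the theorem. The inclusion $I\subseteq cl_{\mathcal A}(I)\cap\mathcal B$ is immediate. For the reverse, fix $x\in cl_{\mathcal A}(I)\cap\mathcal B$ and choose $y_n\in I$ with $p_\ell(y_n-x)\to0$ for every $\ell$. Since $\mathcal B$ is metrizable with increasing seminorms, it suffices to show that for each $m$ and $\varepsilon>0$ some element of $I$ lies within $\varepsilon$ of $x$ in $q_m$. Using the left approximate units of $\mathcal B$, I choose $u\in\mathcal B$ with $q_m(ux-x)<\varepsilon/2$; then $uy_n\in I$ and $q_m(uy_n-x)\le q_m\bigl(u(y_n-x)\bigr)+q_m(ux-x)$. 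The right-module bound gives $q_m\bigl(u(y_n-x)\bigr)\le K_m'\,q_{n_m'}(u)\,p_{\ell_m'}(y_n-x)$, and since $u$ is now fixed while $p_{\ell_m'}(y_n-x)\to0$, this first term is below $\varepsilon/2$ for large $n$. Thus $q_m(uy_n-x)<\varepsilon$, so $x$ lies in the $\mathcal B$-closure of $I$, which equals $I$.

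The step I expect to be the main obstacle is precisely this last estimate: one must convert the merely $\mathcal A$-topological convergence $y_n\to x$ into $q_m$-smallness of $u(y_n-x)$, and this is possible only because the symmetric hypothesis furnishes a continuity bound carrying the $\mathcal A$-seminorm on the factor $(y_n-x)$ that tends to zero, while the fixed approximate unit $u$ absorbs the $\mathcal B$-seminorm. The left-ideal and left-approximate-unit hypotheses are exactly what keep $uy_n$ inside $I$ and force $ux$ close to $x$, so all three ingredients enter the argument simultaneously.
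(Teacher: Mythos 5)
Your proposal is correct and follows essentially the same route as the paper's proof: parts (a) and (b) by the same closure/continuous-inclusion arguments, and part (d) by the identical estimate $q_m\bigl(u(y_n-x)\bigr)\le K_m'\,q_{n_m'}(u)\,p_{\ell_m'}(y_n-x)$ coming from the symmetric hypothesis, combined with the left approximate units and closedness of $I$ in $\mathcal B$. The only (cosmetic) difference is in (c), where you first prove $\mathcal A\cdot I\subseteq cl_{\mathcal A}(I)$ and then pass to the closure, while the paper runs a single combined estimate $p_{\ell}(c_nb_n-ab)\leq p_{\ell}(c_n-a)p_{\ell}(b_n)+p_{\ell}(b_n-b)p_{\ell}(a)$ using boundedness of the convergent sequence $(b_n)$ — your two-step version even avoids that boundedness remark.
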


\begin{proof}
$(a)$. Let $a\in\mathcal A$ and $b\in cl_{\mathcal A}(J)$. Then
there exists a sequence $(b_n)_{n\in\mathbb{N}}$ in $J$ such that
$\lim_{n\rightarrow\infty} p_{\ell}(b_n-b)=0$, for each
$\ell\in\mathbb{N}$. Since $p_{\ell}$ is submultiplicative, thus
$$p_{\ell}(ab_n-ab)\leq \;p_{\ell}(a)p_{\ell}(b_n-b).$$
Thus $\lim_{n\rightarrow\infty}ab_n=ab$, in the topology of
$\mathcal A$. Since $ab_n\in J$ for all $n\in\mathbb{N}$, it
follows that $ab\in cl_{\mathcal A}(J)$. This gives the
implication $(a)$.\\

$(b)$. By $(a)$ it is clear that $cl_{\mathcal A}(J)\cap \mathcal
B$ is a left ideal in $\mathcal B$. We prove that $cl_{\mathcal
A}(J)\cap \mathcal B$ is closed. Let $(a_n)_{n\in \mathbb{N}}$ be
a sequence in $cl_{\mathcal A}(J)\cap \mathcal B$ and
$a\in\mathcal B$ such that $\lim_{n\rightarrow\infty}a_n=a$, in
the topology of $\mathcal B$. Thus $\lim_{n\rightarrow
\infty}q_m(a_n-a)=0$, for every $m\in \mathbb{N}$. It follows that
$(a_n)_{n\in \mathbb{N}}$ tends to $a$, in the topology of
$\mathcal A$. Since $cl_{\mathcal A}(J)$ is closed in $\mathcal
A$, it follows that $a\in cl_{\mathcal A}(J)$. Consequently
$cl_{\mathcal A}(J)\cap\mathcal B$ is a closed left ideal in
$\mathcal B$.

$(c)$. Let $a\in \mathcal A$ and $b\in cl_{\mathcal A}(I)$. Hence
there exists a sequence $(b_n)_{n\in \mathbb{N}}$ in $I$ such that
$\lim_{n\rightarrow \infty }b_n=b$, in the topology of $\mathcal
A$. So $\lim_{n\rightarrow \infty}p_{\ell}(b_n-b)=0$, for each
$\ell\in \mathbb{N}$. Since $\mathcal B$ is dense in $\mathcal A$,
there exists a sequence $(c_n)_{n\in \mathbb{N}}$ in $\mathcal B$
such that $\lim_{n\rightarrow \infty}c_n=a$, in the topology of
$\mathcal A$. Consequently
$\lim_{n\rightarrow\infty}p_{\ell}(c_n-a)=0$, for each $\ell\in
\mathbb{N}$. Moreover we have
\begin{eqnarray*}
p_{\ell}(c_n b_n-ab)&\leq & p_{\ell}((c_n -a)b_n)+p_{\ell}(a(
b_n-b)))\\&\leq &
p_{\ell}(c_n-a)p_{\ell}(b_n)+p_{\ell}(b_n-b)p_{\ell}(a).
\end{eqnarray*}
Since $(b_n)_{n\in \mathbb{N}}$ is a bounded sequence in $\mathcal
A$, the right hand side of the above inequality tends to zero and
so $(c_n b_n)_{n\in \mathbb{N}}$ tends to $ab$, in the topology of
$\mathcal A$, which implies that $ab\in cl_{\mathcal A}(I)$. Thus
the result is obtained.

$(d)$. It is clear that $I\subseteq cl_{\mathcal A}(I)\cap
\mathcal B$. We prove the reverse of the inclusion. Suppose that
$a\in cl_{\mathcal A}(I)\cap \mathcal B$. For each $\varepsilon>0$
and $m\in \mathbb{N}$, there exists $u\in \mathcal B$ such that
$q_m(a-ua)<\varepsilon/2$. Since $a\in cl_{\mathcal A}(I)$, we can
find a sequence $(a_n)_{n\in \mathbb{N}}$ in $I$ such that
$\lim_{n\rightarrow \infty}a_n=a$, in the topology of $\mathcal
A$. It follows that $\lim_{n\rightarrow \infty}p_{\ell}(a_n-a)=0$,
for each $\ell\in \mathbb{N}$. Moreover there exist
$\ell_{m},n_m\in {\Bbb N}$ and $M_m>0$ such that
$$q_m(u a_n-ua)=q_m(u(a_n-a))\leq M_m p_{\ell_{m}}(a_n-a)q_{n_m}(u).$$
Consequently $\lim_{n\rightarrow \infty}q_m(ua_n-ua)=0$, for each
$m\in \mathbb{N}$. Thus there exists $N_m\in \mathbb{N}$, such
that $q_m(u a_{N_m}-ua)<\dfrac{\varepsilon}{2},$ and so $q_m(u
a_{N_m}-a)< \varepsilon$. Since $I$ is closed in $\mathcal B$,
thus $a\in I$. This completes the proof.
\end{proof}

\begin{cor}
Let $(\mathcal B,q_m)_{m\in {\Bbb N}}$ be a symmetric Segal
Fr$\acute{e}$chet algebra in Fr$\acute{e}$chet algebra $(\mathcal
A,p_{\ell})_{\ell\in\mathbb{N}}$ such that $(\mathcal B,q_m)_{m\in
{\Bbb N}}$ has left approximate units. If $I$ is a closed left
ideal in $\mathcal B$, then there exists a closed left ideal $J$
in $\mathcal A$ such that $I=J\cap\mathcal B$. In fact
$J=cl_{\mathcal A}(I)$.
\end{cor}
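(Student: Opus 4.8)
The plan is to produce the ideal $J$ by the obvious construction and then read off both required properties directly from Theorem \ref{t2}, which has already done all of the substantive work. Concretely, I would set $J := cl_{\mathcal A}(I)$, the closure of $I$ in $\mathcal A$, and claim that this single choice satisfies both assertions of the corollary.

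First I would check that $J$ is a closed left ideal of $\mathcal A$. Since $I$ is by hypothesis a closed left ideal of $\mathcal B$, it is in particular a left ideal of $\mathcal B$, so part (c) of Theorem \ref{t2} applies verbatim and yields that $cl_{\mathcal A}(I)$ is a closed left ideal in $\mathcal A$. This already gives the existence of $J$ together with its explicit description $J = cl_{\mathcal A}(I)$.

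Next I would establish the identity $I = J \cap \mathcal B$. Here I would invoke part (d) of Theorem \ref{t2}: the two hypotheses needed there, namely that $I$ be a closed left ideal in $\mathcal B$ and that $\mathcal B$ possess left approximate units, are precisely the standing assumptions of the corollary. Thus part (d) gives $I = cl_{\mathcal A}(I) \cap \mathcal B = J \cap \mathcal B$, which is exactly the asserted equality.

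Since both facts are immediate consequences of a theorem assumed available, there is no genuine obstacle to overcome at this stage; the real content is concentrated in part (d) of Theorem \ref{t2}, whose argument uses the left approximate units to show that any $a \in cl_{\mathcal A}(I) \cap \mathcal B$ is approximated in the $q_m$-topology by elements $u a_n \in I$ (via the module continuity of \eqref{e2} extended in Remark \ref{r1}), forcing $a \in I$ because $I$ is closed in $\mathcal B$. I would also point out that the corollary asserts only the existence of such a $J$, with the explicit value $cl_{\mathcal A}(I)$, and makes no uniqueness claim, so nothing further need be verified.
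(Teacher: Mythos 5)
Your proposal is correct and is exactly the argument the paper intends: the corollary is stated as an immediate consequence of Theorem \ref{t2}, with $J = cl_{\mathcal A}(I)$ being a closed left ideal in $\mathcal A$ by part (c) and $I = cl_{\mathcal A}(I)\cap\mathcal B$ by part (d), whose hypotheses (closedness of $I$ in $\mathcal B$ and left approximate units in $\mathcal B$) you correctly match to the corollary's assumptions. Nothing is missing.
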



\footnotesize

\vspace{9mm}

{\footnotesize \noindent
 F. Abtahi\\
  Department of Mathematics,
   University of Isfahan,
    Isfahan, Iran\\
     abtahif2002@yahoo.com\\

\noindent
 S. Rahnama\\
  Department of Mathematics,
   University of Isfahan,
    Isfahan, Iran\\
     rahnamasomaie51@gmail.com\\

\noindent
 A. Rejali\\
  Department of Mathematics,
   University of Isfahan,
    Isfahan, Iran\\
    alirejali12@gmail.com\\

\end{document}